\newtheorem{lemma}{Lemma}
\newtheorem*{theorem}{Theorem}
\title{An Inductive Proof of Whitney's \\ Broken Circuit Theorem}
\author{Klaus Dohmen\\ Hochschule Mittweida\\ Technikumplatz 17\\ 09648 Mittweida, Germany}
\begin{document}

\maketitle

\begin{abstract}
We present a new proof of Whitney's broken circuit theorem based on induction on the number of edges 
and the deletion-contraction formula.
\end{abstract}

\section{Introduction}

For any finite graph $G$, 
the chromatic polynomial of $G$ is a polynomial in $\mathbb{Z}[\lambda]$
which for any $\lambda\in\mathbb{N}$ evaluates to the number of vertex colorings of $G$
with at most $\lambda$ colors such that
adjacent vertices receive different colors.

This polynomial was introduced by Birkhoff \cite{Birkhoff} in 1912,
who showed that it is a monic polynomial whose degree coincides with the number of vertices of $G$,
provided $G$ is loop-free, and whose coefficients alternate in sign. 
We use
\begin{equation}
\label{darstellung}
P_G(\lambda) = \sum_{k=0}^{n(G)} (-1)^k a_k (G) \lambda^{n(G)-k}
\end{equation}
to denote the chromatic polynomial of $G$ and its coefficients, respectively,
where $n(G)$ denotes the number of vertices of $G$.
If $G$ contains loops, then $P_G(\lambda) = 0$.

Whitney's broken circuit theorem \cite{Whitney} gives a combinatorial interpretation of the coefficients in terms
of so-called broken circuits. 
A \emph{broken circuit} of a graph $G$ arises from the edge-set of a cycle of $G$
by removing its maximum edge with respect to some fixed linear ordering relation on the edges. 

\begin{theorem}[Whitney, 1932]
%\label{wbt}
Let $G$ be a finite graph with a linear ordering relation on its edges.
Then, for $k=0,\dots,n(G)$ the coefficient $a_k(G)$ equals the number of $k$-subsets of the edge-set of $G$
not including any broken circuit of $G$ as a subset.
\end{theorem}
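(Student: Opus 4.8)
The plan is to induct on the number of edges of $G$ and to feed the deletion--contraction recurrence into this induction. Recall that for any edge $e$ of $G$ that is not a loop one has $P_G = P_{G-e} - P_{G/e}$, where $G-e$ and $G/e$ denote deletion and contraction of $e$; since $n(G-e)=n(G)$ and $n(G/e)=n(G)-1$, comparing coefficients in \eqref{darstellung} yields $a_k(G)=a_k(G-e)+a_{k-1}(G/e)$ (reading $a_{-1}(G/e)$ as $0$). Writing $N_k(G)$ for the number of $k$-subsets of $E(G)$ that include no broken circuit of $G$, it is therefore enough to prove that $N_k$ obeys the \emph{same} recurrence $N_k(G)=N_k(G-e)+N_{k-1}(G/e)$ for one well-chosen $e$, plus two routine checks: if $E(G)=\emptyset$ then $P_G=\lambda^{n(G)}$ and only the empty set is at issue, so $a_0=N_0=1$ and $a_k=N_k=0$ for $k\ge1$; and if $G$ has a loop then $P_G=0$ (all $a_k=0$) while the loop, being a cycle whose single --- hence maximum --- edge is removed, yields the broken circuit $\emptyset$, so every subset of $E(G)$ includes a broken circuit and every $N_k=0$. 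So I would reduce to $G$ loop-free with at least one edge.

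The decisive choice is to take $e$ to be the \emph{minimum} edge of $G$. Then $e$ is never the maximum edge of any cycle through it, so for a broken circuit $B=C\setminus\{\max C\}$ of $G$ one has $e\in B$ exactly when $e\in C$; hence the broken circuits of $G$ not containing $e$ are precisely the broken circuits of $G-e$, and the others are the sets $C\setminus\{\max C\}$ with $e\in C$. I would then split the $k$-subsets counted by $N_k(G)$ according to whether they contain $e$. A $k$-subset $S$ with $e\notin S$ is a subset of $E(G-e)$ and vacuously avoids every broken circuit of $G$ through $e$, so it avoids all broken circuits of $G$ iff it avoids all broken circuits of $G-e$; thus the subsets with $e\notin S$ contribute exactly $N_k(G-e)$.

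For a $k$-subset $S$ with $e\in S$, I would write $S=S'\cup\{e\}$ with $S'\subseteq E(G-e)$, so that $S\mapsto S'$ is a cardinality-lowering bijection onto its image. Since the broken circuits of $G$ containing $e$ are just the $C\setminus\{\max C\}$ with $e\in C$, the set $S$ avoids all broken circuits of $G$ iff $S'$ avoids all broken circuits of $G-e$ and avoids all the sets $(C\setminus\{\max C\})\setminus\{e\}$ with $e\in C$. By minimality of $e$, $\max C=\max(C\setminus\{e\})$, so those forbidden sets are precisely the sets $P\setminus\{\max P\}$ where $P$ ranges over the paths of $G-e$ joining the two endpoints of $e$ --- equivalently, over the cycles of $G/e$ through the contracted vertex. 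Invoking the elementary description of cycles under contraction (every cycle of $G/e$ is either a cycle of $G-e$ missing at least one endpoint of $e$, whose broken circuit is unaffected, or one of those closed-up paths), and observing that a cycle of $G-e$ running through both endpoints of $e$ splits into two such paths so that its broken circuit still contains one of the $P\setminus\{\max P\}$, I would conclude that the two conditions on $S'$ together amount to exactly ``$S'$ includes no broken circuit of $G/e$''. Hence the subsets with $e\in S$ contribute exactly $N_{k-1}(G/e)$, giving $N_k(G)=N_k(G-e)+N_{k-1}(G/e)$ and closing the induction.

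The hard part will be this last equivalence: reconciling the broken circuits of $G/e$ with those of $G$ and of $G-e$ requires controlling how cycles --- and their maximum edges --- behave under contraction, the delicate case being a cycle passing through both endpoints of $e$ (a ``figure eight'' after contraction). What makes it go through is precisely that $e$ is the \emph{minimum} edge, so that $e$ is the least, never the greatest, element of every cycle through it; with the maximum edge even the first split already fails, since then a broken circuit of $G$ can avoid $e$ without its cycle doing so.
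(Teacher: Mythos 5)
Your proposal is correct and follows essentially the same route as the paper: induction on $m(G)$ via deletion--contraction at the \emph{minimum} edge $e$, with the two counts matched by showing that broken circuits of $G$ avoiding $e$ are exactly those of $G-e$ and that the forbidden sets for $S\setminus\{e\}$ are exactly the broken circuits of $G|_e$ (your figure-eight observation is precisely the content of the paper's Lemmas~\ref{lemma1}--\ref{lemma3}). The only presentational difference is that you implicitly keep $G/e$ as a multigraph, whereas the paper merges parallel classes onto their maximum representatives and disposes of parallel edges separately in Lemma~\ref{lemma0}; both conventions work.
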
 

Whitney's original proof \cite{Whitney} uses the inclusion-exclusion principle,
followed by some combinatorial arguments;
for a detailed account, we refer to the textbook of Biggs \cite[Ch.~10]{Biggs}.
There are two more recent alternative ways to prove Whitney's result:
\begin{itemize}
 \item using an explicit bijection due to Blass and Sagan \cite{Blass-Sagan}, or
 \item using recent variants of the inclusion-exclusion principle \cite{Dohmen}.
\end{itemize}
Although many famous results on the chromatic polynomial are proved by induction,
so far no such simple proof has been given for Whitney's broken circuit theorem.

In this paper, we provide a proof 
using induction on the number of edges and the deletion-contraction formula \cite{Read},
which states that for any graph $G$ and any edge $e$ of $G$,
\begin{align}
\label{decomposition1}
P_G(\lambda) & = P_{G- e}(\lambda) - P_{G|_e}(\lambda), 
\end{align}
where $G- e$ resp\@. $G|_e$ are obtained from $G$ by removing
resp\@. contracting $e$.
The main idea is to show that if the statement of Whitney's broken circuit theorem holds for both $G- e$ and $G|_e$,
then it also holds for $G$. 

Due to the presence of a linear ordering relation on the edges,
we need a precise definition of the edge contraction operation such that $G|_e$ inherits the linear ordering relation from $G$ in such a way 
that the induction step becomes feasible. 
This definition is given in Section~\ref{prelim}.
The proof of Whitney's theorem follows in Section~\ref{proofsec}. 

\section{Preliminaries}
\label{prelim}
%\subsection{Standard graph terminology}

Throughout the paper, a \emph{graph} is a triple $G=(V,E,\varphi)$,
where $V$ and $E$ are sets and $\varphi$ is a mapping from $E$ to the set of one- or two-element subsets of $V$. 
The elements of $V$ and $E$ are called \emph{vertices} and \emph{edges}, respectively;
$\varphi$ is called an \emph{incidence mapping}. 

A graph is \emph{finite} if both its vertex-set and edge-set are finite.
We use $V(G)$ and $E(G)$ to denote the vertex-set and edge-set of $G=(V,E,\varphi)$, respectively, 
and $n(G)$ resp\@. $m(G)$ to denote their cardinalities.

If $v\in\varphi(e)$ for some $v\in V(G)$ and $e\in E(G)$, we say that $v$ is \emph{incident} with $e$ in $G$.
Two vertices $v,w\in V(G)$ are \emph{adjacent} in $G$ if $\varphi(e) = \{v,w\}$ for some edge $e\in E(G)$.

A \emph{loop} is an edge $e$ such that $|\varphi(e)|=1$. 
Edges $e$ and $e'$ are called \emph{parallel} if $e\neq e'$ and $\varphi(e) = \varphi(e')$.
A graph without loops and parallel edges is called \emph{simple}. 

%\subsection{Deletion and contraction}

Any edge $e\in E(G)$ induces an equivalence relation $\sim_e$ on $V(G)$ via
\begin{align*}
 v\sim_e w & \quad :\Longleftrightarrow \quad \text{$v=w$~~or~~$\varphi(e)=\{v,w\}$},\\
\intertext{and an equivalence relation $\sim_e$ on $E\setminus\{e\}$ (using the same symbol) via}
x\sim_e y & \quad :\Longleftrightarrow \quad \text{$x=y$~~or~~$\varphi(e) = \varphi(x) \bigtriangleup \varphi(y)$}
\end{align*}
where $\bigtriangleup$ denotes symmetric difference of sets. We use $[x]_e$ to denote the equivalence class of $x$ with respect to $\sim_e$,
regardless of whether $x$ is a vertex or an edge. 

\emph{Deletion} and \emph{contraction} of an edge $e$ from $G=(V,E,\varphi)$ are defined by
\begin{align*}
  G - e & \,:=\, \left( V(G),\, E(G)\setminus\{e\},\, \varphi|_{E(G)\setminus\{e\}}\right),\\
  G|_e & \,:=\, \left( V(G)/\!\sim_e, \,\left.\left( E(G)\setminus\{e\}\right)\right/\!\sim_e,\,\varphi_e \right),
\end{align*}
where for any $x\in E(G)\setminus\{e\}$,
\[ \varphi_e \left( [x]_e \right) := \left\{ [v]_e\mathrel| v\in\varphi(x)\right\}. \]
For any such $x$, we identify $[x]_e$ with the \emph{maximum} edge $y\in [x]_e$, for some given linear ordering relation on the edges of $G$.
In this way, $E(G|_e)$ becomes a subset of $E(G)$ and inherits the linear ordering relation.

With the above definitions, the class of simple graphs is closed under deletion and contraction.

\section{Proof of Whitney's broken circuit theorem}
\label{proofsec}

Throughout, $G$ is a graph whose edge-set is endowed with a linear ordering relation. 
Apart from Lemma \ref{lemma0}, $G$ is
required to be loop-free.

\begin{lemma}
\label{lemma0}
Let $e$, $f$ be parallel edges where $e<f$, and $X\subseteq E(G)$.
Then, $X$ includes no broken circuit of $G$ if and only if 
$X\subseteq E(G-e)$ and $X$ includes no broken circuit of $G- e$.
\end{lemma}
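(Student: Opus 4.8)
The plan is to prove both directions by exhibiting a tight correspondence between broken circuits of $G$ and broken circuits of $G-e$, using the fact that $e$ and $f$ are parallel with $e<f$. The key structural observation is that $\{e,f\}$ is itself (the edge-set of) a cycle of $G$, since $\varphi(e)=\varphi(f)$; its maximum edge is $f$, so $\{e\}$ is a broken circuit of $G$. Moreover, any cycle $C$ of $G$ that uses $e$ gives rise to a closely related cycle: either $C=\{e,f\}$, or $C$ uses $e$ but not $f$, in which case replacing $e$ by $f$ yields another cycle $C'$ of $G$ with the same vertex structure. Conversely every cycle of $G-e$ is a cycle of $G$ not containing $e$. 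I would first record these facts about cycles and then translate them into statements about broken circuits, being careful about where the maximum of a cycle lies relative to $e$ and $f$.

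For the forward direction, suppose $X$ includes no broken circuit of $G$. Since $\{e\}$ is a broken circuit of $G$ (coming from the cycle $\{e,f\}$ whose maximum is $f$), we must have $e\notin X$, hence $X\subseteq E(G-e)$. Now take any broken circuit $B$ of $G-e$, arising from a cycle $C$ of $G-e$ by deleting its maximum edge; then $C$ is also a cycle of $G$ (it does not involve $e$), and since $e\notin C$ the maximum edge of $C$ in $G$ is the same as in $G-e$, so $B$ is also a broken circuit of $G$. Hence $B\not\subseteq X$, and $X$ includes no broken circuit of $G-e$.

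For the reverse direction, suppose $X\subseteq E(G-e)$ and $X$ includes no broken circuit of $G-e$; I want to show $X$ includes no broken circuit $B$ of $G$. Write $B = C\setminus\{\max C\}$ for some cycle $C$ of $G$. If $e\notin C$, then $C$ is a cycle of $G-e$ with the same maximum, so $B$ is a broken circuit of $G-e$ and we are done. If $e\in C$, there are two subcases. If $C=\{e,f\}$, then $B=\{e\}$ (the maximum is $f$), and $e\notin X$ since $X\subseteq E(G-e)$, so $B\not\subseteq X$. Otherwise $e\in C$ but $f\notin C$; replacing $e$ by $f$ produces a cycle $C'$ of $G$ not containing $e$, hence a cycle of $G-e$. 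Since $e<f$, passing from $C$ to $C'$ does not decrease the maximum, and in fact $\max C' = \max(C\setminus\{e\}\cup\{f\})$; one checks that $B' := C'\setminus\{\max C'\}$ satisfies $B\cap E(G-e) \subseteq B'$, so $B\setminus\{e\}\subseteq B'$, and $B'$ is a broken circuit of $G-e$, hence $B'\not\subseteq X$, which forces $B\not\subseteq X$ as well (since $e\notin X$ anyway if $e\in B$, and if $e\notin B$ then $B\subseteq B'$).

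The main obstacle I expect is the bookkeeping in the last subcase: one must verify precisely how the maximum edge of $C$ relates to that of $C'=C\setminus\{e\}\cup\{f\}$, using $e<f$, and confirm that the resulting broken circuit $B'$ of $G-e$ really does dominate the part of $B$ lying in $E(G-e)$. Everything else is a routine translation between cycles and broken circuits; the delicate point is ensuring the case $e\in B$ (equivalently $e$ is not the maximum of its cycle $C$, which happens exactly because $f$ could also be present or because some edge larger than $e$ is in $C$) is handled correctly — but since $X\subseteq E(G-e)$ already excludes $e$, any broken circuit containing $e$ is trivially not a subset of $X$, which simplifies that branch considerably.
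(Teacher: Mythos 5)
Your proof is correct and rests on the same key observation as the paper's, namely that $\{e\}$ is a broken circuit of $G$ because $\{e,f\}$ is the edge-set of a cycle whose maximum edge is $f$. The paper's proof consists of essentially only that one remark, so your careful treatment of the reverse direction\,---\,in particular the substitution of $f$ for $e$ in a cycle of $G$ whose maximum edge is $e$, which is genuinely needed to see that a broken circuit of $G$ avoiding $e$ yields a broken circuit of $G-e$\,---\,supplies detail that the paper leaves to the reader.
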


\begin{proof}
The statement holds since, by the assumptions, $\{e\}$ is a broken circuit of $G$. 
\end{proof}

\begin{lemma}
\label{lemma1}
Let $e$ be the minimum edge of $G$, and $Y\subseteq E(G- e)$. 
Then, $Y$ includes a broken circuit of $G- e$ 
if and only if $Y$ includes a broken circuit of $G$.
\end{lemma}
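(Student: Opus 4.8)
The plan is to prove both directions by relating cycles of $G-e$ to cycles of $G$, observing that since $e$ is the minimum edge, it can never be the maximum edge of any cycle it belongs to, hence its presence or absence only affects broken circuits in a controlled way. The key structural fact is that every cycle of $G-e$ is also a cycle of $G$ (deletion only removes edges, never destroys a cycle not using $e$), and conversely every cycle of $G$ is either a cycle of $G-e$ or a cycle containing $e$.

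For the forward direction, suppose $Y\subseteq E(G-e)$ includes a broken circuit $B$ of $G-e$. Then $B = C\setminus\{\max C\}$ for some cycle $C$ of $G-e$ with respect to the ordering on $E(G-e)$. Since $C$ is also a cycle of $G$ and the ordering on $E(G-e)$ is inherited from that on $E(G)$, the maximum of $C$ is the same whether computed in $G-e$ or in $G$; thus $B$ is a broken circuit of $G$ as well, and $Y\supseteq B$ shows $Y$ includes a broken circuit of $G$. For the converse, suppose $Y$ includes a broken circuit $B$ of $G$, say $B = C\setminus\{\max C\}$ for a cycle $C$ of $G$. Since $Y\subseteq E(G-e)$ we have $e\notin Y$, hence $e\notin B$. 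If $e\notin C$ then $C$ is a cycle of $G-e$ and, as above, $B$ is a broken circuit of $G-e$. The remaining case is $e\in C$; here $e\in C\setminus B$, so $e = \max C$. But $e$ is the minimum edge of $G$, so $\max C = e$ forces $C = \{e\}$, i.e. $e$ is a loop — contradicting the standing assumption that $G$ is loop-free. Hence this case cannot occur, and $Y$ includes a broken circuit of $G-e$.

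The main obstacle, such as it is, lies in the converse direction and is exactly the case analysis on whether $e$ belongs to the cycle $C$ underlying the broken circuit; the crucial point is the minimality of $e$, which rules out $e$ being the removed (maximum) edge of any genuine cycle once loops are excluded. One should also take a moment to confirm that the notion of ``cycle'' behaves well under deletion — removing an edge not on $C$ leaves $C$ intact as a cycle — and that the linear ordering on $E(G-e)$ really is the restriction of the one on $E(G)$, so that taking maxima commutes with the deletion operation; both are immediate from the definitions in Section~\ref{prelim}. No contraction or chromatic-polynomial input is needed here: this is a purely combinatorial statement about cycles and the fixed edge ordering.
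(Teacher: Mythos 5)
Your proof is correct and follows essentially the same route as the paper: the forward direction is immediate since cycles of $G-e$ are cycles of $G$ with the inherited ordering, and the converse hinges on the minimality of $e$ ruling out $e$ as the deleted maximum edge of the underlying cycle (the paper phrases this as $f\neq e$ for the restored edge $f>\max C$, you as $e=\max C$ forcing a loop; these are the same observation). No issues.
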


\begin{proof} 
The first direction is obvious. For the opposite direction,
assume $C\subseteq Y$ for some broken circuit $C$ of $G$.
By definition of a broken circuit,
there is some $f\in E(G)$ such that $f>\max C$ and $C\cup\{f\}$ is the edge-set of a cycle of $G$.
From $C\subseteq Y$ and $Y\subseteq E(G- e)$ we conclude that $e\notin C$. 
Since $e$ is the minimum edge of $G$, $f\neq e$ (otherwise $e>\max C$).
Therefore, $C\cup\{f\}$ is the edge-set of a cycle of $G- e$ and hence,
$C$ is a broken circuit of $G- e$. 
Consequently, $Y$ includes a broken circuit of $G- e$.
\end{proof}

\begin{lemma}
\label{lemma2}
Let $e$ be the minimum edge of $G$, and $X\subseteq E(G)$ satisfying $e\in X$.
If $X$ includes no broken circuit of $G$,
then 
\begin{enumerate}[(a)]
\item $X\setminus\{e\} \subseteq E(G|_e)$;
\item $X\setminus\{e\}$ includes no broken circuit of $G|_e$.
\end{enumerate}
\end{lemma}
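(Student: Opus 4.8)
I would fix notation: write $\varphi(e)=\{a,b\}$ and let $\bar a$ denote the common class $[a]_e=[b]_e$, the one new vertex of $G|_e$; recall also that an edge $x\in E(G)\setminus\{e\}$ belongs to $E(G|_e)$ exactly when $x=\max[x]_e$. The plan is to prove (a) and (b) separately, each by contradiction: in each case I would manufacture a broken circuit of $G$ contained in $X$ and invoke the hypothesis. First I would record one preliminary consequence of the hypothesis, namely that $e$ has no parallel edge --- otherwise $\{e\}$ would be a broken circuit of $G$ contained in $X$, as in Lemma~\ref{lemma0}, using that $e$ is minimal --- so that, since $G$ is loop-free, $G|_e$ is loop-free as well.

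For part (a), suppose some $x\in X\setminus\{e\}$ is not in $E(G|_e)$; then there is an edge $y>x$ with $y\sim_e x$, so $\varphi(e)=\varphi(x)\bigtriangleup\varphi(y)$. Since $G$ is loop-free, $\varphi(e)$, $\varphi(x)$, $\varphi(y)$ each have two elements, so from $\varphi(x)\bigtriangleup\varphi(y)=\varphi(e)$ it follows that $\varphi(x)$ and $\varphi(y)$ meet in exactly one vertex; hence $\{e,x,y\}$ is the edge-set of a triangle of $G$. As $e$ is the minimum edge, the maximum of this triangle is $\max\{x,y\}=y$, so $\{e,x\}$ is a broken circuit of $G$; and $\{e,x\}\subseteq X$, the desired contradiction.

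For part (b), suppose $C\subseteq X\setminus\{e\}$ is a broken circuit of $G|_e$, so $E(D)=C\cup\{g\}$ for some cycle $D$ of $G|_e$ with $g=\max E(D)$. The crucial step is to lift $D$ back to $G$, reading each edge of $D$ with its incidences in $G$. If $\bar a$ is not a vertex of $D$, then every edge of $D$ already has both $G$-endpoints outside $\{a,b\}$, so $E(D)$ is the edge-set of a cycle of $G$. If $\bar a$ is a vertex of $D$, then exactly two edges of $D$ meet $\bar a$, and in $G$ each of these has one endpoint in $\{a,b\}$; if these two endpoints coincide, $E(D)$ is again the edge-set of a cycle of $G$, while if they are $a$ and $b$, then $E(D)$ is the edge-set of an $a$--$b$ path of $G$ and $E(D)\cup\{e\}$ is the edge-set of a cycle of $G$. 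In the first two cases the broken circuit of the resulting cycle is $E(D)\setminus\{g\}=C$; in the third, since $e$ is minimal the maximum of $E(D)\cup\{e\}$ is still $g$, so the broken circuit is $C\cup\{e\}$. As $C\subseteq X$ and $e\in X$, in every case we obtain a broken circuit of $G$ contained in $X$, the desired contradiction.

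I expect the hard part to be the lifting argument in (b): one has to verify that, when $\bar a$ lies on $D$, the two edges of $D$ at $\bar a$ are indeed distinct edges of $G$ whose $G$-endpoints lie in $\{a,b\}$, and that every other edge of $D$ is read off with the same endpoints it has in $G|_e$ --- all of which depends on the precise definition of $\sim_e$ and on the convention that the edges of $G|_e$ are the maximal representatives of their $\sim_e$-classes. The few degenerate configurations ($|E(D)|=2$, or $D$ a pair of parallel edges) should reduce quickly once the parallel-edge observation and the loop-freeness of $G|_e$ are available.
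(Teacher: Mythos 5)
Your proposal is correct and follows essentially the same route as the paper: both parts argue by contradiction, producing in (a) the broken circuit $\{e,x\}$ from the triangle $\{e,x,y\}$, and in (b) lifting the cycle of $G|_e$ to a cycle of $G$ with edge-set $C\cup\{g\}$ or $C\cup\{e,g\}$, whose broken circuit ($C$ or $C\cup\{e\}$) lies in $X$. The only difference is that you spell out the lifting case analysis that the paper leaves implicit in the sentence ``Hence, $C\cup\{f\}$ or $C\cup\{e,f\}$ is the edge-set of a cycle of $G$.''
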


\begin{proof}
(a) On the contrary, assume $X\setminus\{e\} \not\subseteq E(G|_e)$. Then, $f\not\in E(G|_e)$ for some $f\in X\setminus\{e\}$.
By definition of $G|_e$, $[f]_e\in E(G|_e)$. Since $[f]_e$ is identified with the maximum edge $h\in [f]_e$, it follows that $h\in E(G|_e)$
for some $h\in E(G)\setminus\{e\}$ where $h\sim_e f$ and $h>f$. Therefore, $\{e,f,h\}$ is the edge-set of a  triangle in $G$, 
and $h = \max\{e,f,h\}$. Hence, $\{e,f\}$ is a broken circuit of $G$, and this broken circuit is a subset of $X$.

(b) On the contrary, assume $C\subseteq X\setminus\{e\}$ for some broken circuit $C$ of $G|_e$.
Then, there exists $f\in E(G|_e)$ such that $f>\max C$ and
$C\cup\{f\}$ is the edge-set of a cycle of $G|_e$.
Hence, $C\cup\{f\}$ or $C\cup\{e,f\}$ is the edge-set of a cycle of $G$.
Since $f>\max C$ and $f>e$, this implies that $C$ or $C\cup\{e\}$ is a broken circuit of $G$. 
Since $C\cup\{e\}\subseteq X$ (because $C\subseteq X\setminus\{e\}$ and $e\in X$), it follows that
$X$ includes a broken circuit of $G$.
\end{proof}

\begin{lemma}
\label{lemma3}
Let $e$ be the minimum edge of $G$, and $Y\subseteq E(G|_e)$. 
If $Y$ includes no broken circuit of $G|_e$, 
then $Y\cup\{e\}$ includes no broken circuit of $G$.
\end{lemma}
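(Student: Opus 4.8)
The plan is to prove the contrapositive: assuming $Y\cup\{e\}$ includes a broken circuit $C$ of $G$, I will produce a broken circuit of $G|_e$ that is contained in $Y$. By the definition of a broken circuit there is an edge $f\in E(G)$ with $f>\max C$ such that $D:=C\cup\{f\}$ is the edge-set of a cycle of $G$. Since $e$ is the minimum edge of $G$ and a broken circuit of the loop-free graph $G$ is nonempty, we get $e\le\max C<f$, so $f\ne e$; moreover, if $e\notin C$ then every edge of $D$ is larger than $e$, so $e\notin D$ in that case too. Hence $e$ is either not an edge of the cycle $D$, or it is exactly one of its edges.

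The heart of the argument is to pass from the cycle $D$ of $G$ to the edge-set $D':=\{\,[x]_e : x\in D\setminus\{e\}\,\}$ of its image in $G|_e$ and to show that no two distinct edges of $D\setminus\{e\}$ are identified there. Indeed, every $x\in C\setminus\{e\}$ belongs to $Y\subseteq E(G|_e)$ and is therefore the maximum of its class $[x]_e$, while $[f]_e$ is the maximum of $[f]_e$ and hence $[f]_e\ge f>\max C$. So $[f]_e\notin C$, $[f]_e$ exceeds every edge of $C\setminus\{e\}$, the assignment $x\mapsto[x]_e$ is injective on $D\setminus\{e\}$, and $\max D'=[f]_e$. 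Because no edges collapse, contracting $e$ turns the cycle $D$ into a closed trail of positive length in $G|_e$ with edge-set $D'$, and every closed trail of positive length contains the edge-set $Z$ of a cycle of $G|_e$.

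It remains to read off the broken circuit. If $[f]_e\in Z$, then $\max Z=[f]_e$ since $\max Z\le\max D'=[f]_e$, so $Z\setminus\{[f]_e\}$ is a broken circuit of $G|_e$ contained in $C\setminus\{e\}\subseteq Y$. If $[f]_e\notin Z$, then $Z\subseteq C\setminus\{e\}\subseteq Y$, and deleting the maximum edge of $Z$ yields a broken circuit of $G|_e$ inside $Y$. Either way, $Y$ includes a broken circuit of $G|_e$, which is exactly what the contrapositive demands. (When $e$ has a parallel edge, that edge becomes a loop of $G|_e$; then $D'$ may be a single loop, $Z=D'$, and the broken circuit produced is $\emptyset$ — consistent with the convention, already forced by $P_{G|_e}=0$, that $\emptyset$ is a broken circuit of any graph with a loop.)

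I expect the only genuine obstacle to be the step asserting that $D'$ still carries a cycle of $G|_e$: for an arbitrary cycle of $G$, contraction can identify several of its edges and flatten its image to a forest, which contains no cycle at all. The non-collision computation above — which uses $C\setminus\{e\}\subseteq E(G|_e)$ together with $[f]_e>\max C$ — is precisely what rules this out, and it is also what pins $[f]_e$ down as $\max D'$, without which one could not identify the resulting broken circuit. Everything else is routine bookkeeping with the linear order.
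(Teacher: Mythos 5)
Your proof is correct, and it follows the same basic strategy as the paper's: assume a broken circuit $C\subseteq Y\cup\{e\}$ of $G$, take the witnessing cycle $C\cup\{f\}$, contract $e$, and locate a broken circuit of $G|_e$ inside $Y$. The difference is in execution. The paper splits into the cases $e\in C$ and $e\notin C$, rules out a triangle in the first of these, and asserts directly that the contracted edge-set is again the edge-set of a single cycle of $G|_e$; you instead pass uniformly to the image $D'$ of $C\cup\{f\}$, verify via the ``everything in $Y$ is a class maximum, and $[f]_e>\max C$'' computation that no two edges collide, and then extract a cycle $Z$ from the resulting closed trail before deleting its maximum edge. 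Your version is slightly longer but more robust: when $e\notin C$ and both endpoints of $e$ lie on the cycle $C\cup\{f\}$, contraction pinches that cycle into two edge-disjoint cycles rather than one, so the image need not itself be a cycle; your closed-trail step absorbs this degenerate case (one still finds $Z$ with either $\max Z=[f]_e$ or $Z\subseteq C\subseteq Y$), whereas the paper's Case~1 passes over it silently. The only mild redundancy is your final parenthetical about parallel edges: the lemma is applied in the main proof only after Lemma~\ref{lemma0} has removed parallel edges, but since the lemma as stated allows them, your remark about $\emptyset$ being a broken circuit of a graph with a loop is a legitimate (and consistent) way to close that case.
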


\begin{proof}
On the contrary, assume that $C\subseteq Y\cup\{e\}$ for some broken circuit $C$ of $G$.
Then, there exists $f\in E(G)$ such that $f>\max C$ and $C\cup\{f\}$ is the edge-set of a cycle of $G$.
We distinguish two cases:

  \textit{Case 1:} 
  $e\notin C$.
  Then, $C\subseteq Y$ and hence, $C\subseteq E(G|_e)$. 
  By this, and since $C\cup\{f\}$ is the edge-set of a cycle of $G$,
  $C\cup\{h\}$ is the edge-set of a cycle of $G|_e$ for some $h\in [f]_e$ satisfying $h\ge f$.
  By transitivity, $h > \max C$. Hence, $C$ is a broken circuit of $G|_e$.
  
  \textit{Case 2:} 
  $e\in C$.
  Then, $C\setminus \{e\} \subseteq Y$. We observe that $C\cup\{f\}$ cannot be the edge-set of a triangle in $G$.
  Otherwise, $C\cup\{f\} = \{e,f,h\}$ for some $h\sim_e f$.
  On the one hand, $f>\max C = h$; since $h\sim_e f$, $h\notin E(G|_e)$.
  On the other hand, $h\in C\setminus\{e\} \subseteq Y \subseteq E(G|_e)$\,---\,a contradiction.
  Therefore, $C\cup\{f\}$ is the edge-set of a cycle of $G$ of length at least four.
  Contracting $e$ gives a cycle in $G|_e$ with edge-set $(C\setminus\{e\}) \cup \{f\}$.
  Since $f>\max C$ and $f>e$ it follows that $C\setminus\{e\}$ is a broken circuit of $G|_e$,
  which is included by $Y$.
  
In both cases, it is shown that $Y$ includes a broken circuit of $G|_e$.
\end{proof}

\begin{lemma}
\label{lemma4}
Let $e$ be the minimum edge of $G$, and $k\in\mathbb{N}$. 
\begin{enumerate}[(a)]
\item There are as many $k$-subsets of $E(G- e)$ including no broken circuit of $G- e$
as there are $k$-subsets of $E(G)$ not containing $e$ and not including any broken circuit of $G$.
\item There are as many $(k-1)$-subsets of $E(G|_e)$ including no broken circuit of $G|_e$
as there are $k$-subsets of $E(G)$ containing $e$ and not including any broken circuit of $G$.
\end{enumerate}
\end{lemma}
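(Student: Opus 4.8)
The plan is to establish each part by exhibiting an explicit bijection between the two families of edge-subsets, where the set-theoretic map is as simple as possible and all the work of matching up the broken-circuit conditions has already been done in Lemmas \ref{lemma1}, \ref{lemma2}, and \ref{lemma3}. So the proof will be short: it is essentially an assembly of the previous three lemmas together with a cardinality count.

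For part (a), I would note that $E(G- e) = E(G)\setminus\{e\}$, so a $k$-subset of $E(G- e)$ is literally the same object as a $k$-subset of $E(G)$ not containing $e$; the identity map is thus the desired bijection at the level of sets. It then remains only to observe that, for such a set $Y$, the condition ``$Y$ includes no broken circuit of $G- e$'' is equivalent to ``$Y$ includes no broken circuit of $G$'', which is exactly the contrapositive of Lemma \ref{lemma1} (applied with $Y$ in the role of $Y$).

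For part (b), the candidate bijection is $Y\mapsto Y\cup\{e\}$, with proposed inverse $X\mapsto X\setminus\{e\}$, and I would verify three points. First, the forward map is well defined: if $Y\subseteq E(G|_e)$ has $k-1$ elements and includes no broken circuit of $G|_e$, then, since by our conventions $E(G|_e)\subseteq E(G)$ and $e\notin E(G|_e)$, the set $Y\cup\{e\}$ is a $k$-subset of $E(G)$ containing $e$, and by Lemma \ref{lemma3} it includes no broken circuit of $G$. Second, the inverse is well defined: if $X\subseteq E(G)$ has $k$ elements, contains $e$, and includes no broken circuit of $G$, then Lemma \ref{lemma2}(a) gives $X\setminus\{e\}\subseteq E(G|_e)$, Lemma \ref{lemma2}(b) gives that $X\setminus\{e\}$ includes no broken circuit of $G|_e$, and clearly $|X\setminus\{e\}|=k-1$. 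Third, the two maps are mutually inverse, which is immediate because $e\notin E(G|_e)$.

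I do not expect a genuine obstacle at this stage: the real combinatorial difficulty — controlling how cycles and broken circuits of $G$ correspond to those of $G- e$ and of $G|_e$, in particular the interaction between the linear order and the edge-identification convention used to define $G|_e$ — has already been isolated into Lemmas \ref{lemma1}--\ref{lemma3}. The only thing to be careful about here is the bookkeeping: tracking that deleting or adjoining the edge $e$ shifts cardinalities by exactly one (hence the $k$ versus $k-1$ in the statement) and confirming that the two set maps are honestly inverse to each other.
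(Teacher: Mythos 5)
Your proof is correct and follows essentially the same route as the paper: part (a) is the identity map together with Lemma~\ref{lemma1}, and part (b) is the bijection $X\mapsto X\setminus\{e\}$ (you simply describe it from the inverse direction $Y\mapsto Y\cup\{e\}$), with well-definedness from Lemma~\ref{lemma2} and surjectivity/inversibility from Lemma~\ref{lemma3}. The only cosmetic difference is that the paper phrases (b) as ``injective trivially, surjective by Lemma~\ref{lemma3}'' rather than exhibiting the two-sided inverse explicitly.
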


\begin{proof}
\begin{enumerate}[(a)]
  \item This is an immediate consequence of Lemma~\ref{lemma1}.
  \item Let $\mathscr{A}_{k-1}(G|_e)$ resp\@. $\mathscr{A}_k^{\prime}(G)$ denote the set of subsets of the first resp\@. second type.
  Consider the mapping $\beta : \mathscr{A}_k^{\prime}(G) \rightarrow \mathscr{A}_{k-1}(G|_e)$, $X\mapsto X\setminus\{e\}$.
  In fact, Lemma~\ref{lemma2} guarantees that $\beta(\mathscr{A}_k^{\prime}(G))\subseteq \mathscr{A}_{k-1}(G|_e)$. 
  Since $\beta$ is one-to-one (trivially) and onto by Lemma~\ref{lemma3}, it is a one-to-one correspondence between $\mathscr{A}_k^{\prime}(G)$ and $\mathscr{A}_{k-1}(G|_e)$. \qedhere
\end{enumerate}
\end{proof}

Using the preceding results, we now prove Whitney's broken circuit theorem.

\begin{proof}
By Lemma \ref{lemma0} we may assume that $G$ has no parallel edges.
If $G$ contains loops, then $\emptyset$ is a broken circuit of $G$, and $P_G(\lambda)=0$.
It is easy to verify that the statement of the theorem holds in this particular case.

In the remainder of the proof, we may assume that $G$ is a simple graph.
We proceed by induction on the number of edges.

If $m(G)=0$, the statement is obvious.
The same applies if $m(G)>0$ and $k=0$.

For the induction step, let $e$ be the minimum edge of $G$.
By (\ref{darstellung}) and (\ref{decomposition1}) we have
\begin{align}
\label{decomposition2}
a_k(G) & = a_k(G- e) + a_{k-1}(G|_e) \quad (k=1,\dots,n(G)).
\end{align}
Since the graphs $G- e$ and $G|_e$ are both simple and have fewer edges than $G$,
we may apply the induction hypothesis for both $G- e$ and $G|_e$. Thus, we find that
\begin{align*}
a_k(G- e) & = \text{number of $k$-subsets of $E(G- e)$ including no broken circuit of $G- e$}, \\
a_{k-1}(G|_e) & = \text{number of $(k-1)$-subsets of $E(G|_e)$ including no broken circuit of $G|_e$}.
\end{align*}
Therefore, by Lemma~\ref{lemma4} the theorem is proved.
\end{proof}


\begin{thebibliography}{9}
\bibitem{Birkhoff} 
G.D. Birkhoff, 
\emph{A determinant formula for the number of ways of coloring a map},
Ann\@. Math\@. \textbf{14} (1912), 42--46.

\bibitem{Biggs}
N. Biggs,
Algebraic Graph Theory,
2nd edition,
Cambridge University Press,
1994.

\bibitem{Blass-Sagan}
A. Blass \&\ B.E. Sagan,
\emph{Bijective proofs of two broken circuit theorems},
J. Graph Theory \textbf{10} (1986), 15--21.

\bibitem{Dohmen}
K. Dohmen,
\emph{An improvement of the inclusion-exclusion principle},
Arch\@. Math\@. \textbf{72} (1999), 298--303.

\bibitem{Read}
R.C. Read,
\emph{An introduction to chromatic polynomials},
J. Combin\@. Theory \textbf{4} (1968), 52--71.

\bibitem{Whitney} 
H. Whitney, 
\emph{A logical expansion in mathematics},
Bull\@. Amer\@. Math\@. Soc\@. \textbf{38} (1932), 572--579.

\end{thebibliography}
\end{document}